\newcommand{\dfcn}[4]{\setlength{\arraycolsep}{1.5pt}\begin{array}{ccc}#1&\to&#2\\#3&\mapsto&#4\end{array}}
\newcommand{\R}{\mathbb{R}}
\newcommand{\Z}{\mathbb{Z}}
\newcommand{\T}{\mathbb{S}^1}
\DeclareMathOperator{\Homeo}{\mathrm{Homeo}}
\newcommand{\rot}{\mathsf{rot}}
\newcommand{\Fix}{\mathsf{Fix}}
\newtheorem{thm}{Theorem}
\newtheorem*{claim}{Claim}
\newtheorem{lem}[thm]{Lemma}
\newtheorem{prop}[thm]{Proposition}
\title[On James Hyde's example]{On James Hyde's example of non-orderable subgroup of $\mathrm{Homeo}(D,\partial D)$}
\date{}
\author{Michele Triestino}
\address{Institut de Math\'ematiques de Bourgogne, Universit\'e Bourgogne Franche-Comt\'e, CNRS UMR 5584,
9 av.~Alain Savary,
21000 Dijon, France.}
\email{michele.triestino@u-bourgogne.fr}
\begin{document}

\begin{abstract}
	In [Ann.\ Math.\ 190 (2019), 657--661], James Hyde presented the first example of non-left-orderable, finitely generated subgroup of $\mathrm{Homeo}(D,\partial D)$, the group of homeomorphisms of the disk fixing the boundary. This implies that the group $\mathrm{Homeo}(D,\partial D)$ itself is not left-orderable. We revisit the construction, and present a slightly different proof of purely dynamical flavor, avoiding direct references to properties of left-orders. Our approach allows to solve the analogue problem for actions on the circle.
	
	\smallskip
	{\noindent\footnotesize \textbf{MSC\textup{2010}:} Primary 37C85. Secondary 37E05, 37E10, 37E20.}
	\newline
	\textbf{Key-words:} One-dimensional actions, Klein bottle group, groups of planar homeomorphisms.
\end{abstract}

\maketitle

\section{Introduction}

Let $\Homeo(D,\partial D)$ denote the group of homeomorphisms of the disk $D$ which fix the boundary $\partial D$. In \cite{Hyde} James Hyde gave a bright proof of the fact that $\Homeo(D,\partial D)$ is non-left-orderable, solving a fundamental question about this group: it is different, even at algebraic level, from the group of homeomorphisms of the real line.
Indeed, it is a classical fact (attributed to Conrad \cite{conrad}) that a countable group is left-orderable if and only if it admits a faithful action on the real line by orientation-preserving homeomorphisms, and by Burns--Hale Theorem, a group is left-orderable if and only if all finitely generated subgroups are; see Clay and Rolfsen \cite{CR}.
 In these terms, the result of James Hyde can be stated as follows:

\begin{thm}[Hyde]\label{t:hyde}
There exists an explicit finitely generated subgroup $G$ of $ \Homeo(D,\partial D)$ which does not embed into the group of orientation-preserving homeomorphisms of the real line $\Homeo_+(\R)$.
\end{thm}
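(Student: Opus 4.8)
The plan is to produce an explicit finitely generated group $G$ acting faithfully on the disk $D$ by homeomorphisms fixing $\partial D$, such that $G$ cannot act faithfully on $\R$ by orientation-preserving homeomorphisms. Rather than manipulate left-orders abstractly, I would build the obstruction into the \emph{dynamics} of the action.

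First I would fix the algebraic source of the obstruction. A natural candidate is a relation that is satisfiable by commuting-type homeomorphisms of the plane but forbidden on the line. The Klein bottle group $K = \langle a,b \mid aba^{-1}=b^{-1}\rangle$ (flagged in the key-words) is the prototype: it embeds in $\Homeo_+(\R)$, but one knows that in any orientation-preserving action on $\R$ the element $b$ must act with a fixed point whose germ is constrained by the conjugacy relation. The idea is to assemble several copies of $K$, or of a similar two-generator relation, so that the required fixed-point sets are forced to be nested or disjoint in incompatible ways, yielding a \emph{global} contradiction on $\R$ that the planar picture evades.

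Concretely, I would proceed in the following steps. (1) Define generating homeomorphisms of $D$ supported on prescribed sub-disks, each pair satisfying a Klein-bottle-type relation; the extra room in two dimensions lets me realize $aba^{-1}=b^{-1}$ faithfully, e.g.\ with $a$ a half-turn rotation and $b$ a shear, which is impossible to do faithfully on $\R$. (2) Arrange the supports so that the group generated is genuinely the abstract group $G$ I want (checking no unwanted relations collapse it). (3) Suppose toward a contradiction that $G \hookrightarrow \Homeo_+(\R)$. (4) Translate each Klein-bottle relation into a statement about fixed-point sets: if $aba^{-1}=b^{-1}$ then $a$ maps $\Fix(b)$ to itself but reverses the orientation of the $\R$-action of $b$ on each complementary interval, which pins down the combinatorics of where $b$ is positive/negative. (5) Combine these constraints across the copies to reach a configuration of intervals on $\R$ that cannot be simultaneously realized, the contradiction.

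The main obstacle I expect is step (4)--(5): turning the single-relation constraint into a rigid, \emph{finite} combinatorial obstruction. A lone Klein-bottle relation does embed in $\Homeo_+(\R)$, so the non-orderability must come from the \emph{interaction} of several relations sharing generators, and the delicate part is choosing these so that the forced sign patterns of the generators on their fixed-point complements over-determine the system. I would look for a pigeonhole-style argument: finitely many generators partition $\R$ into finitely many maximal intervals of constant sign, each relation imposes a parity or ordering constraint on adjacent intervals, and the relations are rigged so the constraints form an odd cycle (as in the Klein bottle's orientation-reversal) that admits no consistent assignment. Making that cycle genuinely unavoidable on \emph{any} faithful $\R$-action, not merely on a nice one, is where I anticipate the real work, since I must rule out degenerate dynamics (e.g.\ generators with empty or wild fixed-point sets) without appealing to smoothness.
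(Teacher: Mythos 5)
Your high-level framing is right --- Klein bottle relations, translated into constraints on fixed-point sets, assembled so that several copies sharing generators cannot coexist on $\R$ --- but the mechanism you propose for the final contradiction (a ``parity/odd-cycle'' obstruction on sign patterns of generators over the complementary intervals of their fixed-point sets) is not the one that works, and you have not identified the two ideas that actually carry the proof. The first missing idea is a \emph{product relation}: one arranges generators $\alpha,\beta,\gamma,\delta$ so that $\beta$ is central, $\alpha^6$ conjugates $\gamma$ and $\delta$ to their inverses (Klein bottle relations), the conjugates $g_k=\alpha^k\delta^{-2}\gamma\delta^2\alpha^{-k}$ pairwise commute, and crucially $\prod_{k=0}^{11}g_k^2=\beta$. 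This exact identity is what converts the Klein-bottle fixed-point information into information about $\beta$: in any faithful action on $\R$, each $g_k$ must have a fixed point inside every component $I$ of $\R\setminus\Fix(\alpha)$ (Klein bottle relation plus $f$-invariance of $\Fix(g)$), the commuting family then has a \emph{common} fixed point in $I$, and hence $\beta$, being the product, fixes a point of $I$. The conclusion is a density statement --- $\Fix(\beta)$ meets every gap of $\Fix(\alpha)$ --- not a sign-parity statement. The second missing idea is the conjugating involution $\eta(x,y)=(y,x)$, which swaps the roles of $\alpha$ and $\beta$ and produces a second subgroup $\overline H$ for which $\Fix(\alpha)$ must meet every gap of $\Fix(\beta)$. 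Taking $G=\langle H,\overline H\rangle$, the contradiction is then an impossible infinite nesting: a gap $I$ of $\Fix(\alpha)$ contains a gap $J$ of $\Fix(\beta)$, which must contain points of $\Fix(\alpha)$, contradicting $J\subset I\subset\R\setminus\Fix(\alpha)$.

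Your step (5) as written --- finitely many intervals of constant sign, constraints forming an odd cycle --- has no evident route to completion: a single Klein bottle relation constrains signs only \emph{relative to} the fixed-point set of the conjugated generator, and without the product relation there is nothing forcing the fixed-point sets of different generators to interleave at all. You also correctly flag the danger of degenerate dynamics, but the resolution is not to rule them out by hand: the paper's lemmas show that for $K=\langle f,g\mid fgf^{-1}=g^{-1}\rangle$ acting on $\R$ without global fixed point one always has $\Fix(g)\neq\varnothing$ and $\Fix(f)=\varnothing$, and these soft facts are all that is needed once the product relation is in place. So the gap is concrete: you are missing the identity $\prod g_k^2=\beta$ (which requires a genuinely careful choice of piecewise-linear shears so that a certain sum telescopes to a constant) and the symmetry swap; without them the ``interaction of several relations'' you invoke remains a hope rather than an argument.
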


Confirming the above-mentioned idea that $\Homeo(D,\partial D)$ owns a ``higher-dimensional algebraic structure'', we extend the result of Hyde to actions on the circle.

\begin{thm}\label{t:circle}
	There exists an explicit finitely generated subgroup $G$ of $\Homeo(D,\partial D)$ which does not embed into the group of homeomorphisms of the circle $\Homeo(\T)$.
\end{thm}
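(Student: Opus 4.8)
The plan is to argue by contradiction, reducing a hypothetical faithful action on the circle to a faithful action on the line of a closely related group, to which the dynamical mechanism behind Theorem~\ref{t:hyde} applies. Suppose then that $G$ embeds into $\Homeo(\T)$. Composing with the orientation character $\Homeo(\T)\to\{\pm1\}$ produces a subgroup $G_+\le G$ of index at most $2$ acting by orientation-preserving homeomorphisms. Pulling this action back along the universal covering $\R\to\T$, one obtains a faithful action on $\R$ of the group $\widetilde{G_+}$ of all lifts: this is a central extension
\[
1\to\Z\to\widetilde{G_+}\to G_+\to1,
\]
where the central $\Z=\langle T\rangle$ is generated by the unit translation and is \emph{cofinal}, in the sense that the $T$-orbit of every point is unbounded on both sides. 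Thus an embedding $G\hookrightarrow\Homeo(\T)$ forces a faithful action $\widetilde{G_+}\curvearrowright\R$ with a cofinal central element, and everything reduces to excluding such an action by dynamical means.

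Second, I would replay the dynamical configuration used for Theorem~\ref{t:hyde} inside $\widetilde{G_+}\curvearrowright\R$. The line argument produces a contradiction from the way the supports (equivalently, the complementary intervals of the fixed-point sets) of finitely many distinguished elements must be arranged; the Klein-bottle relation $bab^{-1}=a^{-1}$ is what rigidifies this arrangement, as it forces $b$ to reverse the direction in which $a$ pushes on each component of $\R\setminus\Fix(a)$. The point is that all the elements entering the configuration live in $G_+$ and lift to $\widetilde{G_+}$, so the same relations and the same fixed-point/pushing dynamics are available on $\R$; I would then check that the contradiction is reached \emph{within one fundamental domain} of $T$, making it insensitive to the global, $T$-periodic structure of the action.

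The step I expect to be the main obstacle is exactly this interaction with the cofinal central translation $T$. On the line the basic moves rely on localising near a fixed point of some element and comparing the positions of a few points of an orbit; modulo $T$ — i.e. genuinely on the circle — an element need no longer have a fixed point, since its rotation number can be nonzero, so the localisation must be performed equivariantly. To control this I would use that the relations of $G$ pin down the rotation numbers of the distinguished elements, which record the asymptotic translation amounts of their lifts: relations such as $bab^{-1}=a^{-1}$ (forcing $2\rot(a)=0$) together with the commutator relations built into the construction should force the relevant lifts to possess honest fixed points on $\R$, restoring the line argument on a bounded window. Making these rotation-number constraints simultaneously compatible is where the explicit group must be chosen with care.

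Finally, two bookkeeping points have to be secured. First, the passage to the index-$\le2$ subgroup $G_+$ must not destroy the obstruction: I would arrange the explicit generators of $G$ so that the distinguished elements already lie in $G_+$ — for instance by taking them in the commutator subgroup, which maps trivially to $\{\pm1\}$ — so that the configuration survives verbatim. Second, since we work with $\widetilde{G_+}$ rather than $G_+$ itself, the obstruction must be robust under central $\Z$-extensions; this becomes automatic once the contradiction is extracted from a single fundamental domain, where the central $T$ acts trivially and $\widetilde{G_+}$ is indistinguishable from $G_+$. Combining these, the assumed embedding $G\hookrightarrow\Homeo(\T)$ cannot exist, which proves Theorem~\ref{t:circle}.
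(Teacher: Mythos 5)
Your overall strategy (reduce to orientation-preserving actions, then force fixed points via rotation numbers and rerun the line argument locally) is in the right spirit, but it diverges from the paper at the reduction step and leaves the two genuinely delicate points unresolved. First, passing to the index-$\le 2$ subgroup $G_+$ does not obviously retain the elements you need: your fix of placing the distinguished elements in the commutator subgroup works for $\alpha$ and $\beta$ (indeed $\beta=\prod_k g_k^2$ is a product of squares of conjugates of $\gamma$, and $\gamma$ has order dividing $2$ in the abelianization), but $\gamma$, $\delta$ and their barred counterparts need not lie in $[G,G]$, and the Klein-bottle relations $\alpha^6\gamma\alpha^{-6}=\gamma^{-1}$ and $\alpha^6\delta\alpha^{-6}=\delta^{-1}$ are exactly what drives the argument. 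The paper's reduction is different and avoids this entirely: one adjoins in $\Homeo(D,\partial D)$ a square root of each generator and works with the group $\widetilde G$ these roots generate; in any action on $\T$ the original generators, being squares, automatically preserve orientation.

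Second, and more seriously, the lift to the central extension $\widetilde{G_+}$ is a genuine obstacle that your sketch does not overcome: lifts of the generators satisfy the defining relations only up to powers of the central translation $T$ (e.g.\ $\tilde\alpha^6\tilde\gamma\tilde\alpha^{-6}=\tilde\gamma^{-1}T^m$), so Lemmas \ref{l:fixf} and \ref{l:fixg} do not apply to them as stated, and the assertion that the contradiction ``becomes automatic once extracted from a single fundamental domain'' is not an argument --- $T$ acts freely, and to localize at all you must first produce fixed points. The paper sidesteps the lift altogether and works directly on $\T$ with rotation numbers. The one point you do not identify, and which is the actual heart of the circle case, is why $\beta$ has a fixed point: the Klein-bottle relation only controls $\gamma$ (giving $\rot(\gamma)\in\{0,\tfrac12\}$, hence $\rot(g_k^2)=0$ for every $k$), and one then uses the identity $\prod_{k=0}^{11}g_k^2=\beta$ together with additivity of the rotation number on the abelian subgroup $\langle g_0,\dots,g_{11}\rangle$ to get $\rot(\beta)=\sum_k\rot(g_k^2)=0$ (Lemma \ref{l:fixbetacircle}); applied symmetrically to $\overline H$ this gives $\Fix(\alpha)\neq\varnothing$ as well. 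With both fixed-point sets nonempty, Lemma \ref{l:kleincircle} gives $\Fix(\alpha)\subset\Fix(\gamma^2)\cap\Fix(\delta^2)$, and the line argument runs inside each component of $\T\setminus\Fix(\alpha)$ (Proposition \ref{p:fixbetacircle}). Until you supply a substitute for this rotation-number computation and repair the central-extension issue, the proposal has a genuine gap.
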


This is done by rewriting the proof of Hyde in terms of one-dimensional actions, with minor changes. Instead of the nice algebraic bounds used in \cite{Hyde} (which would correspond to bounds on displacement functions of elements in the group), we use the commutation relations in the group defined by Hyde, and the classifications of actions of the Klein bottle group. Note also that the group $G$ that we consider for Theorem \ref{t:circle} is slightly different (but it contains Hyde's group as a subgroup).

\section{Preliminaries on actions on the real line}\label{s:line}

In the following, all actions on the real line will be assumed to be by orientation-preserving homeomorphisms, unless explicitly mentioned.
The reader who is familiar with groups acting on the real line can skip this part, as all results presented here are classical.

It will be important to have a precise picture of possible faithful actions on $\R$ of the groups 
\[\Z^2=\langle f,g\mid fgf^{-1}=g\rangle\quad\text{and}\quad K=\langle f,g\mid fgf^{-1}=g^{-1}\rangle.\]
The group $K$ is the classical \emph{Klein bottle group}.
Up to restrict the action to some invariant interval, we can assume that our actions have no global fixed points. In the following, we will write $f$ and $g$ for the generators of the groups $\Z^2$ and $K$, as above. 

Denote by $\Fix(g)=\{x\in \R\mid g(x)=x\}$ the set of fixed points of $g$, and observe that for any homeomorphisms $f$ and $g\in \Homeo_+(\R)$, one has $\Fix(g)=\Fix(g^{-1})$ and $\Fix(fgf^{-1})=f(\Fix(g))$.
This implies that for any action of either $\Z^2$ or $K$ on the real line, the set of fixed points $\Fix(g)$ is preserved by~$f$.

\begin{lem}\label{l:fixf}
For any faithful action of either $\Z^2$ or $K$ on the real line without global fixed points, if $\Fix(g)\neq\varnothing$ then $\Fix(f)=\varnothing$.

In other terms, for any action of either $\Z^2$ or $K$ on the real line, if both generators $f$ and $g$ have fixed points, then the action has a global fixed point.
\end{lem}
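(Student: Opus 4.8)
The plan is to prove the contrapositive: assuming both $\Fix(f)\neq\varnothing$ and $\Fix(g)\neq\varnothing$, I want to produce a global fixed point, contradicting the hypothesis that there is none. The key structural input is the observation already made in the text: $\Fix(g)$ is invariant under $f$ (and symmetrically, in the $\Z^2$ case, $\Fix(f)$ is invariant under $g$). So first I would look at the closed set $\Fix(g)$, which is nonempty and $f$-invariant, and analyze the action of the cyclic group $\langle f\rangle$ on it.

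The central idea is that an orientation-preserving homeomorphism of $\R$ that preserves a nonempty closed proper subset $F\subsetneq\R$ must itself have a fixed point inside $F$ (or at its boundary), essentially because $f$ permutes the connected components of the open complement $\R\setminus F$ while preserving their cyclic/linear order, and more to the point $f$ fixes each of the two ``ends'' behavior of $F$. Concretely, I would argue: since $\Fix(g)$ is $f$-invariant, $f$ restricts to an orientation-preserving homeomorphism of the closed set $\Fix(g)$; if $\Fix(g)$ contains a point that is not fixed by $f$, then iterating $f$ and using monotonicity forces a fixed point of $f$ to lie in $\overline{\Fix(g)}=\Fix(g)$ as a limit (a monotone orbit in a closed invariant set accumulates on a fixed point of $f$ lying in that set). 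Combining with $\Fix(f)\neq\varnothing$, I would try to locate a point that is simultaneously fixed by $f$ and $g$.

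The cleaner route, which I expect to be the main step, is to exploit the \emph{common} structure: I would consider $\Fix(g)$ and show $f$ must fix a point of it. Because $f$ has a fixed point somewhere on $\R$, the fixed-point set $\Fix(f)$ is a nonempty closed set, and by the relation $g$ maps $\Fix(g)$-related data around, I would intersect the two fixed sets. The crucial lemma to invoke is the standard fact that for an orientation-preserving homeomorphism $h$ of $\R$ preserving a nonempty closed set $F$, there is a fixed point of $h$ in $F\cup\partial F$; applying this to $h=f$ and $F=\Fix(g)$ gives a point $x_0\in\Fix(g)$ (using that $\Fix(g)$ is closed) with $f(x_0)=x_0$, hence $x_0\in\Fix(f)\cap\Fix(g)$, a global fixed point. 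For the $K$ case the relation $fgf^{-1}=g^{-1}$ still gives $f(\Fix(g))=\Fix(g^{-1})=\Fix(g)$, so the same argument applies verbatim.

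The main obstacle is justifying that an orientation-preserving homeomorphism preserving a nonempty closed set $F$ genuinely has a fixed point in $\overline{F}$: one must handle the case where $f$ has no fixed point on all of $\R$ (so $f(x)>x$ everywhere, say) and show this is incompatible with preserving the \emph{bounded-below} or \emph{order} structure of $F$ unless $F$ is empty or unbounded in a compatible way. The careful point is that if $f(x)>x$ for all $x$ then $f$ cannot fix any point of $F$, but here we already \emph{assume} $\Fix(f)\neq\varnothing$, so $f$ is not fixed-point-free; the real content is that some fixed point of $f$ can be taken inside $\Fix(g)$ rather than in a complementary gap. I would handle this by examining the components of $\R\setminus\Fix(g)$: $f$ permutes them preserving order, and if $f$ fixes a point it must fix either a boundary point of some component (landing in $\Fix(g)$) or act fixed-point-freely within a single invariant component — and in the latter case I would show the point of $\Fix(f)$ forces, via the order-preservation and invariance, the existence of a fixed endpoint in $\Fix(g)$. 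This boundary/endpoint analysis is the delicate part I would write out carefully.
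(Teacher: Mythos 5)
Your proposal is correct and is essentially the paper's own argument: take a point of $\Fix(g)$, iterate it under $f$, and use monotonicity together with $\Fix(f)\neq\varnothing$ and the closedness and $f$-invariance of $\Fix(g)$ to produce a common fixed point of $f$ and $g$. One caveat: the ``standard fact'' as you first state it (an orientation-preserving homeomorphism preserving a nonempty closed set has a fixed point in it) is false without the hypothesis $\Fix(f)\neq\varnothing$ (a translation preserves $\Z$), but you correctly note and repair this, and the component/endpoint analysis in your last paragraph is an unnecessary detour once the monotone-orbit argument is in place.
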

\begin{proof}
Assume by contradiction $\Fix(f)\neq\varnothing$. Given $x\in \Fix(g)$, then $\{f^n(x)\}_{n\in\Z}$ accumulates on some point $p\in \Fix(f)$. Observe that by $f$-invariance of $\Fix(g)$, we have $\{f^n(x)\}\subset \Fix(g)$. Moreover $\Fix(g)$ is closed, so we must also have $p\in \Fix(g)$. This gives a point $p$ which is fixed by both $f$ and $g$, and hence by the whole group. This gives the desired contradiction.
\end{proof}

In the case of the group $K$, the condition $\Fix(g)\neq\varnothing$ is always satisfied.

\begin{lem}\label{l:fixg}
For any action of $K$ on the real line, we always have $\Fix(g)\neq\varnothing$. Moreover, one has the inclusion $\Fix(f)\subset \Fix(g)$.
\end{lem}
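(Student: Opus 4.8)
The plan is to exploit the elementary fact that an orientation-preserving homeomorphism of an open interval either has a fixed point or moves every point in the same direction, together with the observation that conjugation by an orientation-preserving homeomorphism preserves this direction, whereas the defining relation $fgf^{-1}=g^{-1}$ reverses it. Throughout I will use, as recorded above, that $\Fix(g)$ is closed and $f$-invariant, the latter because $f(\Fix(g))=\Fix(fgf^{-1})=\Fix(g^{-1})=\Fix(g)$.

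First I would prove that $\Fix(g)\neq\varnothing$. Arguing by contradiction, suppose $g$ has no fixed point. Then $x\mapsto g(x)-x$ is continuous and nowhere zero, hence of constant sign; say $g(x)>x$ for all $x$, the other case being symmetric. Since $f$ is increasing, for every $x$ one has $fgf^{-1}(x)=f\bigl(g(f^{-1}(x))\bigr)>f\bigl(f^{-1}(x)\bigr)=x$, so $fgf^{-1}$ also pushes every point to the right. But $fgf^{-1}=g^{-1}$ pushes every point to the left, a contradiction. Hence $\Fix(g)\neq\varnothing$.

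For the inclusion $\Fix(f)\subset\Fix(g)$ I would run the same argument locally. Let $p\in\Fix(f)$ and suppose $p\notin\Fix(g)$. Since $\Fix(g)$ is closed and nonempty, $p$ lies in a maximal open interval $I$ of its complement, whose finite endpoints belong to $\Fix(g)$ and on which $g$ is fixed-point-free. As $f$ preserves $\Fix(g)$, it permutes these complementary intervals; since $f(p)=p\in f(I)\cap I$ and distinct intervals are disjoint, necessarily $f(I)=I$. Being orientation-preserving and fixing $I$ setwise, $f|_I$ is an increasing self-homeomorphism of $I$, so the previous paragraph applied verbatim to $g|_I$ and $f|_I$ contradicts $fgf^{-1}=g^{-1}$. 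Therefore $p\in\Fix(g)$.

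The only genuine point requiring care, and the step I expect to be the main obstacle, is this localization: one must check that a fixed point of $f$ lying outside $\Fix(g)$ forces $f$ to preserve the ambient gap $I$ setwise, so that the displacement-sign argument can be carried out inside $I$. This in turn rests squarely on the $f$-invariance of $\Fix(g)$ and on $f$ being orientation-preserving. Once this is in place, both assertions reduce to the single observation that conjugating a one-sided interval homeomorphism by an increasing map cannot invert it.
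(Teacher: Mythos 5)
Your proof is correct, and the first assertion is argued exactly as in the paper (constant sign of the displacement of $g$, preserved under conjugation by the increasing map $f$ but reversed by the relation). For the inclusion $\Fix(f)\subset\Fix(g)$, where the paper only writes that it ``follows from the proof of Lemma~\ref{l:fixf}'', your localization to the gap of $\Fix(g)$ containing $p$ --- using $f$-invariance of $\Fix(g)$ to see that $f$ preserves that gap, then reapplying the first assertion there --- is precisely the intended argument, written out in full and with the one genuinely delicate point ($f(I)=I$) correctly identified and justified.
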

\begin{proof}
We assume for contradiction $\Fix(g)=\varnothing$, and without loss of generality we can assume $g(x)>x$ for every $x\in \R$ (otherwise we consider the inverse $g^{-1}$). Therefore $gf^{-1}(x)>f^{-1}(x)$ for every $x\in \R$. As $f$ preserves orientation, this implies $fgf^{-1}(x)>x$ for every $x\in \R$, and consequently the relation $fgf^{-1}=g^{-1}$ implies $g^{-1}(x)>x$, which is in contradiction with our assumption.
The second assertion follows from the proof of Lemma~\ref{l:fixf}.
\end{proof}

We can now describe all possible faithful actions of either $\Z^2$ or $K$ on the real line without global fixed points, with the condition $\Fix(g)\neq \varnothing$. This is not strictly needed for the rest of the text, but it helps the reader to make a picture of the dynamics under consideration.

\begin{lem}\label{l:actions}
Consider an action of  either $\Z^2$ or $K$ on the real line without global fixed points, with the condition $\Fix(g)\neq \varnothing$. Then the action is $C^0$ conjugate to an action obtained from the construction below.

Assume $f(x)=x+1$ or $f(x)=x-1$.
Given $x\in \R$, consider the interval $I$ joining the points $x$ and $f(x)$. Given any orientation-preserving homeomorphism $h:I\to I$, there exists a unique orientation-preserving homeomorphism $g$ which satisfies $g\vert_I=h$ and $fgf^{-1}=g^{\epsilon}$, where $\epsilon\in\{-1,+1\}$ is chosen accordingly to the group that is acting ($\epsilon =1$ in case of $\Z^2$ and $\epsilon=-1$ in case of $K$).
\end{lem}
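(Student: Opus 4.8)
The plan is to reduce everything to the classical normal form for a fixed-point-free homeomorphism and then to reconstruct $g$ from its restriction to a single fundamental domain. First I would combine the hypothesis $\Fix(g)\neq\varnothing$ with Lemma \ref{l:fixf} to obtain $\Fix(f)=\varnothing$. A fixed-point-free element of $\Homeo_+(\R)$ satisfies either $f(x)>x$ for all $x$ or $f(x)<x$ for all $x$, since the continuous function $x\mapsto f(x)-x$ never vanishes and hence has constant sign; such an $f$ is $C^0$ conjugate to the translation $x\mapsto x+1$ or $x\mapsto x-1$ accordingly. Concretely one fixes a homeomorphism of $[0,f(0)]$ onto the corresponding fundamental domain of the translation $T$ and extends it by the equivariance $\varphi f=T\varphi$; this is the only genuinely classical input and I would merely recall it. Conjugating the whole action by $\varphi$ replaces $(f,g)$ by $(T,\varphi g\varphi^{-1})$, which still satisfies the same relation $Tg'T^{-1}=(g')^{\epsilon}$ and still has $\Fix(g')=\varphi(\Fix(g))\neq\varnothing$. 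From now on we may therefore assume $f=T$.

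Next I would exploit that $\Fix(g)$ is $f$-invariant, as recalled just before Lemma \ref{l:fixf} (using $\Fix(g^{\epsilon})=\Fix(g)$), hence invariant under integer translations once $f=T$. Choosing any $x\in\Fix(g)$, the point $f(x)=x\pm1$ again lies in $\Fix(g)$, so the fundamental domain $I$ joining $x$ and $f(x)$ is mapped to itself by $g$; its restriction $h:=g\vert_I$ is then an orientation-preserving self-homeomorphism of $I$, and in particular $h$ fixes the two endpoints of $I$. This already shows that every action of the prescribed type arises from the construction, for a suitable choice of $x$ and $h$.

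It remains to justify the construction itself, namely the existence and uniqueness of $g$ given $h$. For uniqueness I would note that the translates $f^n(I)$ tile $\R$ and that the relation forces $g\vert_{f^n(I)}=f^n h^{\epsilon^n} f^{-n}$: rewriting $fgf^{-1}=g^{\epsilon}$ as $g=f g^{\epsilon} f^{-1}$ gives the one-step recursion $g\vert_{f^n(I)}=f\circ(g\vert_{f^{n-1}(I)})^{\epsilon}\circ f^{-1}$, and an immediate induction multiplies the exponent by $\epsilon$ at each step, so $g$ on every tile is completely determined by $h$. For existence I would simply define $g$ by this formula on each tile and verify that the pieces glue to an orientation-preserving homeomorphism of $\R$ satisfying $g\vert_I=h$ and $fgf^{-1}=g^{\epsilon}$. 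The one point needing care is continuity and bijectivity at the common endpoints $x+n$: these hold precisely because $h$, being an orientation-preserving homeomorphism of a closed interval, fixes the endpoints of $I$, so the two adjacent formulas agree there and each tile is mapped onto itself. I expect this endpoint bookkeeping — together with tracking the exponent $\epsilon^n=(-1)^n$ in the Klein bottle case, where the orientation-preservation of each piece $h^{\epsilon^n}$ must be re-checked — to be the only mildly delicate part; everything else is a routine verification.
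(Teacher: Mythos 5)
Your proposal is correct and follows essentially the same route as the paper's proof: Lemma \ref{l:fixf} gives $\Fix(f)=\varnothing$ hence the conjugacy to a translation, $f$-invariance of $\Fix(g)$ gives the invariant fundamental domain $I$, and the relation forces $g\vert_{f^n(I)}=f^nh^{\epsilon^n}f^{-n}$ on each tile. The paper leaves the existence/gluing verification implicit, whereas you spell out the endpoint bookkeeping; this is a faithful elaboration rather than a different argument.
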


\begin{proof}
The fact that $g$ is uniquely defined by the homeomorphism $h:I\to I$ is because one must have
\[
g\vert_{f^n(I)}=f^nh^{\epsilon^n}f^{-n}\vert_{f^n(I)}
\]
after the relation in the group.

Reversely, by Lemma \ref{l:fixf}, we have that $f$ has no fixed point and thus $f$ is $C^0$ conjugate to either $x\mapsto x+1$ or $x\mapsto x-1$. Take then any $x\in \Fix(g)$, and observe that also $f(x)\in \Fix(g)$, so that the restriction $g\vert_I$ defines an orientation-preserving homeomorphism of $I$, where $I$ is the interval joining $x$ and $f(x)$.
\end{proof}

\section{Preliminaries on actions on the circle}\label{s:p-circle}
We keep the assumption that actions preserve the orientation.
Given an orientation-preserving homeomorphism $f:\T\to \T$, we denote by $\rot(f)\in \R/\Z$ its rotation number, which equals
\[
\rot(f)=\mu[0,f(0)),
\]
for any $f$-invariant Borel probability measure $\mu$.
We will use the following classical facts (see e.g.\ Ghys \cite{Ghys}):
\begin{itemize}
\item $\rot(f)=0$ if and only if $f$ has a fixed point;
\item the rotation number is a conjugacy invariant (that is, $\rot(hfh^{-1})=\rot(h)$);
\item for any amenable group $G$, $\rot:G\to \R/\Z$ defines a group homomorphism (in particular, $\rot(f^n)=n\,\rot(f)$).
\end{itemize} 
Given an action of $K=\langle f,g\mid fgf^{-1}=g^{-1}\rangle$ on the circle, the conjugacy-invariance of the rotation number gives
\[
\rot(g)\in \left \{0,\tfrac12\right \},
\]
therefore $g^2$ always has a fixed point. We will need an improved version of this fact:

\begin{lem}\label{l:kleincircle}
	Consider a faithful action of $K$ on the circle, with $\Fix(f)\neq \varnothing$. Then $\Fix(f)\subset\Fix(g^2)$. As a consequence, for any connected component $I$ of $\T\setminus\Fix(f)$ one has that $\Fix(g^2)\cap I$ is infinite.
\end{lem}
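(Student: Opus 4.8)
The plan is to reduce everything to the line statement of Lemma~\ref{l:fixg} by restricting the action to suitable invariant arcs. The starting observation is that $g^2$ again satisfies a Klein bottle relation with $f$: since $fgf^{-1}=g^{-1}$, we get $fg^2f^{-1}=g^{-2}$. In particular $\langle f,g^2\rangle$ is a homomorphic image of $K$ (with $g^2$ playing the role of the generator $g$), and the fixed-point set $\Fix(g^2)$ is $f$-invariant, because $\Fix(fg^2f^{-1})=f(\Fix(g^2))$ while $fg^2f^{-1}=g^{-2}$ has the same fixed points as $g^2$. Moreover, the conjugacy invariance of the rotation number gives $\rot(g)\in\{0,\tfrac12\}$, hence $\rot(g^2)=0$ and $\Fix(g^2)\neq\varnothing$.

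First I would prove the inclusion $\Fix(f)\subset\Fix(g^2)$ by contradiction. Suppose some $p\in\Fix(f)$ has $g^2(p)\neq p$, and let $J$ be the connected component of $\T\setminus\Fix(g^2)$ containing $p$. Since $\Fix(g^2)$ is a nonempty closed set, $J$ is a proper open arc, homeomorphic to $\R$, whose endpoints lie in $\Fix(g^2)$. As $f$ fixes $p$ and permutes the components of $\T\setminus\Fix(g^2)$ (by $f$-invariance of $\Fix(g^2)$), it must fix $J$ setwise; and $g^2$ fixes the endpoints of $J$ and preserves orientation, so it too preserves $J$. Therefore $\langle f,g^2\rangle$ acts on the line $J$, and Lemma~\ref{l:fixg} (applied with $g^2$ in the role of $g$) forces $g^2$ to have a fixed point inside $J$ --- contradicting $J\cap\Fix(g^2)=\varnothing$.

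For the consequence, I would take a component $I=(c,d)$ of $\T\setminus\Fix(f)$. By the inclusion just proved, the endpoints $c,d\in\Fix(f)$ also lie in $\Fix(g^2)$. Since $f$ and $g^2$ both fix $c$ and $d$ and preserve orientation, both preserve the arc $I$, so once again $\langle f,g^2\rangle$ acts on the line $I$. Lemma~\ref{l:fixg} then produces a point $y\in I$ with $g^2(y)=y$. Because $I$ contains no fixed point of $f$, the orbit $\{f^n(y)\}_{n\in\Z}$ consists of infinitely many distinct points; and since both $I$ and $\Fix(g^2)$ are $f$-invariant, this whole orbit lies in $\Fix(g^2)\cap I$, which is therefore infinite.

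The only delicate points, and where I expect to spend the most care, are the invariance bookkeeping and a degenerate case. One must check precisely that each arc under consideration is invariant under both $f$ and $g^2$, so that Lemma~\ref{l:fixg} really applies to a genuine action on the line; here it is crucial that Lemma~\ref{l:fixg} needs no faithfulness, since the restricted actions need not be faithful. One must also treat the case where $\Fix(g^2)$ reduces to a single point, so that $J=\T\setminus\{\mathrm{pt}\}$: here $f$ fixes that point by $f$-invariance of $\Fix(g^2)$, the arc $J\cong\R$ is still invariant under both generators, and the same contradiction via Lemma~\ref{l:fixg} applies.
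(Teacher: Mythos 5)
Your proof is correct, but for the key inclusion $\Fix(f)\subset\Fix(g^2)$ it takes a genuinely different route from the paper. The paper observes that $K_0=\langle f,g^2\rangle$ is isomorphic to $K$, hence amenable, hence preserves a Borel probability measure on $\T$; since both $f$ and $g^2$ have fixed points, the support of that measure lies in $\Fix(f)\cap\Fix(g^2)$, so $K_0$ has a \emph{global} fixed point, and cutting the circle there reduces the whole statement in one stroke to Lemma~\ref{l:fixg} on the line. You instead avoid invariant measures entirely: you argue by contradiction, localize to the component $J$ of $\T\setminus\Fix(g^2)$ containing a putative point of $\Fix(f)\setminus\Fix(g^2)$, check that $J$ is invariant under both $f$ and $g^2$, and apply Lemma~\ref{l:fixg} on $J\cong\R$ to contradict $J\cap\Fix(g^2)=\varnothing$. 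Both arguments rely on the conjugacy invariance of the rotation number to guarantee $\Fix(g^2)\neq\varnothing$. What your version buys is elementarity and care at exactly the right places: you correctly flag that Lemma~\ref{l:fixg} needs no faithfulness (essential, since the restricted action on $J$ need not be faithful), you handle the degenerate case where $\Fix(g^2)$ is a single point, and you spell out the orbit argument giving infiniteness of $\Fix(g^2)\cap I$, which the paper leaves implicit. What the paper's version buys is brevity: the global fixed point lets it quote Lemma~\ref{l:fixg} once, on the full line, rather than component by component.
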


\begin{proof}
	By assumption, both $f$ and $g^2$ have fixed points. Observe that the subgroup $K_0$ of $K$ generated by $f$ and $g^2$ is isomorphic to $K$. In particular it is amenable, and it preserves a Borel probability measure on $\T$, whose support is contained in $\Fix(f)\cap \Fix(g^2)$, which is thus nonempty. As a consequence, we have that the subgroup $K_0$ generated by $f$ and $g^2$ acts on the circle $\T$ with a global fixed point. Therefore the action of $K_0$ reduces to an action on the real line (identified with the complement of a global fixed point), to which we can apply Lemma \ref{l:fixg}.
\end{proof}

\section{Choice of generators}
Consider the following one-parameter families of planar homeomorphisms:
\begin{align*}
a^t(x,y)&=\left (x+t,y\right ),\\
b^t(x,y)& = \left (x,y+t\right ),\\
c^t(x,y)&=\left (x,y+t\gamma_0(x)\right ),
\end{align*}
where $t\in \R$ and  $\gamma_0:\R\to \R$ is the $1$-periodic function such that
\begin{equation}\label{eq:gamma}
\gamma_0(x)=\left \{\begin{array}{lr}
-4x+1& \text{if }x\in [0,1/2),\\[.5em]
4x-3 & \text{if }x\in [1/2,1).
\end{array}\right.
\end{equation}
We fix
\[
\alpha=a^{1/12},\quad \beta=b^{1/12},\quad \gamma=c^{1/168}.
\]
We also introduce the planar homeomorphism
\[
\delta(x,y)=(\delta_0(x),y),
\]
where $\delta_0:\R\to\R$ is the orientation-preserving homeomorphism of the real line such that $\delta_0(x+1/2)=\delta_0^{-1}(x)+1/2$ and
\[
\delta_0(x)=\left \{\begin{array}{lr}
\frac12 x& \text{if }x\in [0,1/3),\\[.5em]
2x-\frac12 & \text{if }x\in [1/3,1/2).
\end{array}\right.
\]
Observe that we also have the relation $\gamma_0(x+1/2)=-\gamma_0(x)$. See Figure \ref{f:delta0}  for the graphs of these two functions.

We let $H=\langle \alpha,\beta,\gamma,\delta\rangle$ denote the subgroup of $\Homeo(\R^2)$ generated by these four homeomorphisms.

\begin{figure}[ht]
\[\includegraphics[scale=1]{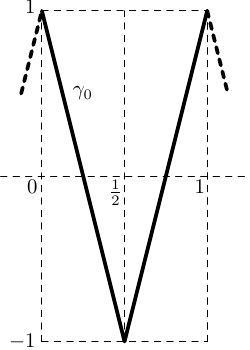}
\quad
\includegraphics[scale=1]{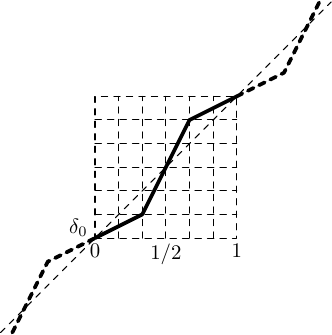}\]
\caption{Graphs of $\gamma_0$ (left) and $\delta_0$ (right).}\label{f:delta0}
\end{figure}

\section{Properties}
It is clear that the four generators of $H$ display bounded displacement. More precisely, one checks that
\[
\|s(x,y)-(x,y)\|\le 1/3\quad\text{for every }(x,y)\in \R^2\text{ and }s\in\{\alpha,\beta,\gamma,\delta\}.
\]
Choosing then the identification
\[
\dfcn{\R^2}{[-\pi/2,\pi/2]^2}{(x,y)}{(\arctan x,\arctan y)}\]
we can see the four homeomorphisms as elements of the group $\Homeo(I^2,\partial I^2)$ of homeomorphisms of the square which fix the boundary (which is clearly isomorphic to $\Homeo(D,\partial D)$).

We first compute
\begin{equation}\label{eq:delta2}
\delta_0^2(x)=\left\{
\begin{array}{lr}
\tfrac14x&\text{if }x\in [0,4/12),\\[.5em]
x-\tfrac14&\text{if }x\in[4/12,5/12),\\[.5em]
4x-\tfrac32&\text{if }x\in[5/12,1/2).
\end{array}
\right.
\end{equation}
We record the following basic but important properties:
\begin{itemize}
\item $\beta$ commutes with every other homeomorphism $\alpha,\gamma$, and $\delta$,
\item $\alpha^6\gamma \alpha^{-6}=\gamma^{-1}$,
\item $\alpha^6\delta \alpha^{-6}=\delta^{-1}$,
\item all the elements $g_k:=\alpha^k\delta^{-2}\gamma\delta^2\alpha^{-k}$ (for $k\in \Z$) pairwise commute.
\end{itemize}
Observe that $\alpha^{12}$ commutes with both $\delta$ and $\gamma$, so we have $g_k=g_{k+12}$ for every $k\in \Z$.
We also have the following key relation:
\begin{lem}\label{lem:relation}
We have the relation
\[
\prod_{k=0}^{11}g_k = b^{1/24}
\]
and thus
\[
\prod_{k=0}^{11}g^2_k = \beta.
\]
\end{lem}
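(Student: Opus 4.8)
The plan is to recognize that each $g_k$ is a purely vertical shear, reduce the product to a single scalar identity, and then verify that identity by a periodicity argument followed by a finite piecewise computation.

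First I would record the two conjugation rules for vertical shears. Writing $S_\psi(x,y)=(x,y+\psi(x))$ for the shear with displacement function $\psi$, one has $\delta^{-1}S_\psi\delta=S_{\psi\circ\delta_0}$ and $\alpha^{k}S_\psi\alpha^{-k}=S_{\psi(\cdot-k/12)}$, both immediate from the definitions $\delta(x,y)=(\delta_0(x),y)$ and $\alpha(x,y)=(x+1/12,y)$. Since $\gamma=c^{1/168}=S_{\frac{1}{168}\gamma_0}$, iterating the first rule and then applying the second gives
\[
g_k=\alpha^k\delta^{-2}\gamma\delta^2\alpha^{-k}=S_{\psi_k},\qquad \psi_k(x)=\tfrac{1}{168}\,\gamma_0\bigl(\delta_0^2(x-k/12)\bigr).
\]
Because each $g_k$ is a vertical shear whose displacement depends on $x$ alone, the $g_k$ commute (as already noted) and their composition adds displacement functions, so
\[
\prod_{k=0}^{11}g_k=S_{\Sigma/168},\qquad \Sigma(x)=\sum_{k=0}^{11}\gamma_0\bigl(\delta_0^2(x-k/12)\bigr).
\]
Since $b^{1/24}=S_{1/24}$ and $7/168=1/24$, the first assertion is equivalent to the scalar identity $\Sigma\equiv 7$ on $\R$.

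To prove this I would set $\Phi:=\gamma_0\circ\delta_0^2$, so that $\Sigma(x)=\sum_k\Phi(x-k/12)$. First, $\Phi$ is $1$-periodic: the twisted relation $\delta_0(x+1/2)=\delta_0^{-1}(x)+1/2$ yields $\delta_0(x+1)=\delta_0(x)+1$, hence $\delta_0^2(x+1)=\delta_0^2(x)+1$, and $\gamma_0$ is $1$-periodic. Consequently $\Sigma$ is $1/12$-periodic, a shift of $x$ by $1/12$ merely permuting the twelve summands, so it suffices to check $\Sigma\equiv 7$ on $[0,1/12)$. On that interval the twelve arguments $x-k/12$ reduce mod $1$ to one point in each subinterval $[j/12,(j+1)/12)$. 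For the six points landing in $[0,1/2)$ I would use $\Phi(x)=1-4\delta_0^2(x)$ with the three-branch formula for $\delta_0^2$ (slopes $1/4,1,4$ already computed in the text); for the six landing in $[1/2,1)$ I would use $\Phi(x+1/2)=-\gamma_0(\delta_0^{-2}(x))$, which follows from $\delta_0^2(x+1/2)=\delta_0^{-2}(x)+1/2$ together with $\gamma_0(u+1/2)=-\gamma_0(u)$, after writing out the three branches of $\delta_0^{-2}$.

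The main obstacle is precisely this last bookkeeping: each of the twelve shifted arguments must be placed in the correct branch of $\delta_0^2$ or $\delta_0^{-2}$ and of $\gamma_0$, producing twelve affine functions of $x$ to be summed. The mechanism that makes it collapse is that the $x$-linear parts cancel exactly (the six lower terms contribute total slope $-24$, the six upper terms slope $+24$) while the constant parts add to $9/2+5/2=7$; everything else is formal. Once the first relation is established, the second is immediate: the $g_k$ commute, so
\[
\prod_{k=0}^{11}g_k^2=\Bigl(\prod_{k=0}^{11}g_k\Bigr)^2=\bigl(b^{1/24}\bigr)^2=b^{1/12}=\beta.
\]
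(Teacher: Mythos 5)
Your proposal is correct and follows essentially the same route as the paper: both reduce the identity to showing that $\sum_{k=0}^{11}\gamma_0\bigl(\delta_0^2(x-k/12)\bigr)\equiv 7$, exploit the $1/12$-periodicity coming from $\delta_0(x+1)=\delta_0(x)+1$, and then perform the same finite piecewise bookkeeping (your cancellation of slopes $-24$ and $+24$ is exactly the paper's computation that $\varphi'\equiv 0$, and your constant count $9/2+5/2=7$ matches its evaluation $\varphi(0)=7$). The only cosmetic difference is that the paper phrases the collapse as ``derivative vanishes, then evaluate at $0$'' while you sum the twelve affine branches directly.
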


\begin{proof}
We set $t=1/168$.
We first check that
\[
g_0(x,y)=\delta^{-2} \gamma \delta^2(x,y)= \left (x,y+t \gamma_0\left (\delta^2_0(x)\right )\right )
\]
and therefore
\[
g_k(x,y)=\alpha^kg_0\alpha^{-k}(x,y)=\left (x,y+t\gamma_0\left (\delta_0^2\left (x-\tfrac{k}{12}\right )\right )\right ).
\]
We deduce
\begin{equation}\label{eq:key}
\prod_{k=0}^{11}g_k = \left (x,y+t\sum_{k=0}^{11}\gamma_0\left (\delta_0^2\left (x-\tfrac{k}{12}\right )\right )\right ).
\end{equation}
\begin{claim}
For every $x\in \R$, we have
\[
\sum_{k=0}^{11}\gamma_0\left (\delta_0^2\left (x-\tfrac{k}{12}\right )\right )=7.
\]
\end{claim}
\begin{proof}[Proof of Claim]
Note that the function $\varphi(x)=\sum_{k=0}^{11}\gamma_0\left (\delta^2_0\left (x-\tfrac{k}{12}\right )\right )$ is $1/12$-periodic and differentiable at every $x\in (0,1/12)$.
Given $x\in (0,1/12)$, we write $x_k=x-\tfrac{k}{12}$ and $y_k=\delta^2_0(x_k)$. Then
\[
\varphi'(x)=\sum_{k=0}^{11} \gamma_0'(y_k)(\delta^2_0)'(x_k).
\]
Observe that:
\begin{itemize}
	\item the derivative of $\gamma_0$ is constant on both intervals $(0,1/2)$ and $(1/2,1)$, which are preserved by $\delta_0^2$;
	\item the derivative of $\delta_0^2$ is constant on any interval of the form $\left (\tfrac{k}{12},\tfrac{k+1}{12}\right )$, $k\in \Z$.
\end{itemize}
Then, by close inspection of the values of these derivatives, by means of the expressions \eqref{eq:gamma} and \eqref{eq:delta2}, one finds that
\[
\varphi'(x)=-4\left (4\cdot \tfrac14+1+4\right )+4\left (4+1+4\cdot\tfrac14\right )=0.
\]
Therefore $\varphi$ is constant and it is enough to compute its value at the point $0$:
\[
\varphi(0)=2\left (\left (\sum_{k=1}^{4}\left (-\tfrac{k}{12}+1\right )\right )-4\cdot \tfrac{5}{12}+2\right )=7\]
(this can be done by evaluating the composition $\gamma_0\circ \delta_0^2$ from the expressions \eqref{eq:gamma} and \eqref{eq:delta2} of $\gamma_0$ and $\delta_0^2$, respectively).
\end{proof}
After the claim and the expression \eqref{eq:key}, we have
\[
\prod_{k=0}^{11}g_k = (x,y+7t)=b^{7t}=b^{7/168}=b^{1/24}.\qedhere
\]
\end{proof}

\section{On actions on the real line}
\begin{prop}\label{p:fixbeta}
For any faithful action of $H$ on $\R$ and connected component $I$ of $\R\setminus \Fix(\alpha)$, we have that $\Fix(\beta)\cap I$ is infinite.
\end{prop}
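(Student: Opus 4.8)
The plan is to exploit the structure of the elements $g_k = \alpha^k \delta^{-2}\gamma\delta^2\alpha^{-k}$ together with the commutation relations recorded before Lemma~\ref{lem:relation}, and to reduce the problem to the Klein bottle analysis of Section~\ref{s:line}. First I would fix a faithful action of $H$ on $\R$ and a connected component $I$ of $\R\setminus\Fix(\alpha)$. Since $\alpha$ has no fixed points inside $I$, after passing to $I$ (which is $\alpha$-invariant) we may regard $\alpha$ as acting like a translation, say $\alpha(x)>x$ for all $x\in I$ up to replacing $\alpha$ by $\alpha^{-1}$. The goal is to produce infinitely many fixed points of $\beta$ inside $I$, and the natural mechanism is the relation $\prod_{k=0}^{11}g_k^2=\beta$ from Lemma~\ref{lem:relation}, which expresses $\beta$ as a product of the commuting elements $g_k^2$.

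The key algebraic input is that each pair $(\alpha^6, g_k)$ generates a copy of the Klein bottle group: indeed the relations $\alpha^6\gamma\alpha^{-6}=\gamma^{-1}$ and $\alpha^6\delta\alpha^{-6}=\delta^{-1}$ give $\alpha^6 g_k \alpha^{-6} = \alpha^{6} \alpha^k \delta^{-2}\gamma\delta^2 \alpha^{-k}\alpha^{-6}$, and conjugating $\delta^{-2}\gamma\delta^2$ by $\alpha^6$ inverts it, so that $\alpha^6 g_k \alpha^{-6}=g_k^{-1}$. Thus $\langle \alpha^6, g_k\rangle$ is a quotient of $K=\langle f,g\mid fgf^{-1}=g^{-1}\rangle$ with $f=\alpha^6$ and $g=g_k$. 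Here I would apply Lemma~\ref{l:fixg}: since $\alpha^6$ has no fixed point in $I$, the inclusion $\Fix(f)\subset\Fix(g)$ is vacuous on $I$, but more usefully Lemma~\ref{l:fixg} guarantees $\Fix(g_k)\neq\varnothing$ on the whole line. The point is to locate these fixed points \emph{inside} $I$, and for this I would use that $\alpha^6$ acts on $I$ without fixed points, forcing $\Fix(g_k)\cap I$ to be $\alpha^6$-invariant and hence, if nonempty, infinite (accumulating at both ends of $I$, exactly as in the proof of Lemma~\ref{l:fixf}).

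From here the strategy is to combine the fixed-point sets of the $g_k$. Because the $g_k$ pairwise commute, $\Fix(g_k)$ is invariant under every $g_j$, and since $\beta=\prod_{k}g_k^2$, any point fixed by all twelve elements $g_k^2$ is fixed by $\beta$. Concretely I would argue that the common fixed set $\bigcap_{k=0}^{11}\Fix(g_k^2)$ meets $I$ in an infinite set: the subgroup generated by $\alpha^6$ and all the $g_k$ is amenable (it is a quotient of an abelian-by-$\Z$ group, being generated by the abelian family $\{g_k\}$ together with $\alpha^6$ which acts on them by inversion), so it preserves a Borel probability measure on the relevant invariant interval, and its support lands in the common fixed set of the $g_k^2$. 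Iterating the $\alpha^6$-invariance argument then spreads these common zeros throughout $I$, yielding infinitely many fixed points of $\beta$ in $I$.

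The main obstacle I anticipate is the passage from ``some $g_k$ has a fixed point somewhere on $\R$'' to ``$\beta$ has infinitely many fixed points inside the specific component $I$''. The delicate part is ensuring that the amenability/invariant-measure argument really places fixed points inside $I$ rather than elsewhere, and that the twelve sets $\Fix(g_k^2)$ have a \emph{common} intersection with $I$ rather than merely each being nonempty. I expect this to hinge on the amenability of the group generated by $\alpha^6$ and the commuting family $\{g_k\}$, together with the fact that $\alpha^6$-invariance upgrades any single fixed point in $I$ to an infinite, bi-accumulating set; the cleanest route is probably to realize the whole family as living inside one amenable group acting on $I$, extract an invariant measure whose support is fixed pointwise by $\beta$, and then propagate by $\alpha$.
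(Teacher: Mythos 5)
Your overall skeleton (get fixed points of the building blocks inside $I$ via the Klein bottle analysis, combine them using commutativity, use $\prod_k g_k^2=\beta$, then propagate by $\alpha$) matches the paper's, but two of your key steps are genuinely broken. First, the relation $\alpha^6 g_k\alpha^{-6}=g_k^{-1}$ is false: since $\alpha^6\delta\alpha^{-6}=\delta^{-1}$, conjugating $\delta^{-2}\gamma\delta^2$ by $\alpha^6$ yields $\delta^{2}\gamma^{-1}\delta^{-2}$, which is the inverse of $\delta^{2}\gamma\delta^{-2}$, \emph{not} of $\delta^{-2}\gamma\delta^{2}$ (and one can check on the explicit formulas that these differ). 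So $\langle\alpha^6,g_k\rangle$ is not a Klein bottle pair, and your route to $\Fix(g_k)\cap I\neq\varnothing$ collapses — indeed you never close that gap, writing ``if nonempty, infinite.'' The paper instead applies Lemma~\ref{l:fixg} to the pairs $(\alpha^6,\gamma)$ and $(\alpha^6,\delta)$: the inclusion $\Fix(\alpha^6)\subset\Fix(\gamma)\cap\Fix(\delta)$ (which you dismissed as ``vacuous on $I$'') is exactly what shows the endpoints of $I$ are fixed by $\gamma$ and $\delta$, so both preserve $I$; restricting the two Klein bottle actions to $I\cong\R$ and applying Lemma~\ref{l:fixg} again gives $\Fix(\gamma)\cap I\neq\varnothing$ and $\Fix(\delta)\cap I\neq\varnothing$; and then $g_k=(\alpha^k\delta^{-2})\,\gamma\,(\alpha^k\delta^{-2})^{-1}$ is a conjugate of $\gamma$ by an element preserving $I$, so $\Fix(g_k)\cap I=\alpha^k\delta^{-2}(\Fix(\gamma)\cap I)\neq\varnothing$.

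Second, the amenability/invariant-measure argument cannot work on $I$: the group you propose contains $\alpha^6$, which acts on the open interval $I$ without fixed points and therefore preserves no Borel probability measure on $I$ (amenability only yields invariant measures on \emph{compact} spaces; even $\Z$ acting by translations on $\R$ preserves none). Dropping $\alpha^6$ does not immediately help either: abstract amenability of $A=\langle g_0,\dots,g_{11}\rangle$ gives nothing on a line. What does work — and is the paper's argument — is the contrapositive of Lemma~\ref{l:fixf}: two commuting homeomorphisms of $\R$ that each have fixed points have a common fixed point; applying this inductively to the abelian group $A$, all of whose generators have fixed points in $I$, produces a global fixed point of $A$ in $I$. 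Since $\beta=\prod_k g_k^2\in A$, that point is fixed by $\beta$, and the final propagation is simpler than what you propose: $\alpha$ and $\beta$ commute, so $\Fix(\beta)\cap I$ is $\alpha$-invariant and nonempty, hence infinite. (Your alternative via $\alpha$-invariance of $\bigcap_k\Fix(g_k^2)$, using $\alpha g_k\alpha^{-1}=g_{k+1}$ and $g_{k+12}=g_k$, would also be fine.)
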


\begin{proof}
We let $I\subset \R$ be a connected component of $\R\setminus\Fix(\alpha)$. Observe that $\Fix(\alpha^k)=\Fix(\alpha)$ for every $k\neq 0$.
The relations $\alpha^6 \gamma \alpha^{-6}=\gamma^{-1}$ and $\alpha^6 \delta \alpha^{-6}=\delta^{-1}$ imply (by Lemma \ref{l:fixg}) that both $\gamma$ and $\delta$ preserve $I$ and have fixed points inside $I$. This also implies that for every $k\in \Z$, the element $g_k=\alpha^k\delta^{-2}\gamma\delta^2\alpha^{-k}$ preserves $I$ and has fixed points in $I$.
 We have already observed that the subgroup $A=\langle g_0,\ldots,g_{11}\rangle\le H$ is abelian.  Applying Lemma \ref{l:fixf} to the action of $A$ on $I\cong \R$, by an inductive argument on the rank, we deduce that $A$ admits a global fixed point in $I$. By Lemma \ref{lem:relation}, we have $\beta\in A$, so that $\beta$ fixes a point in $I$. As $\alpha$ and $\beta$ commute, we actually see that $\beta$ has infinitely many fixed points in $I$. 
\end{proof}

\section{On actions on the circle}
\begin{lem}\label{l:fixbetacircle}
	For any faithful, orientation-preserving action of $H$ on $\T$, one has $\Fix(\beta)\neq\varnothing$.
\end{lem}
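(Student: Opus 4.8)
The plan is to compute the rotation number $\rot(\beta)$ directly and show that it vanishes; since an orientation-preserving circle homeomorphism has a fixed point precisely when its rotation number is zero (first item in Section \ref{s:p-circle}), this gives $\Fix(\beta)\neq\varnothing$ immediately. The key point is that the relation of Lemma \ref{lem:relation} expresses $\beta$ as a product of the squares of the elements $g_k$, and each such square turns out to have vanishing rotation number.

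Concretely, I would argue as follows. Recall that the elements $g_0,\dots,g_{11}$ pairwise commute, so the subgroup $A=\langle g_0,\dots,g_{11}\rangle\le H$ is abelian, hence amenable. Restricting the given action to $A$ and using that $\rot$ is a homomorphism on amenable groups, I get $\rot(\beta)=\sum_{k=0}^{11}\rot(g_k^2)=\sum_{k=0}^{11}2\rot(g_k)$, where I have used $\beta=\prod_{k=0}^{11}g_k^2$ from Lemma \ref{lem:relation} together with $\rot(g_k^2)=2\rot(g_k)$. Next, each $g_k=\alpha^k\delta^{-2}\gamma\delta^2\alpha^{-k}$ is conjugate to $\gamma$, so by conjugacy-invariance $\rot(g_k)=\rot(\gamma)$ for every $k$. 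Finally, the relation $\alpha^6\gamma\alpha^{-6}=\gamma^{-1}$ together with conjugacy-invariance forces $\rot(\gamma)=\rot(\gamma^{-1})=-\rot(\gamma)$ in $\R/\Z$, whence $2\rot(\gamma)=0$. Substituting, every term $\rot(g_k^2)=2\rot(\gamma)$ vanishes, so $\rot(\beta)=0$ and $\beta$ fixes a point.

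I do not expect a genuine obstacle here: the argument is purely a rotation-number computation, and all the analytic input (the homomorphism property of $\rot$ on amenable groups, and the fact that the Klein bottle relation pins $\rot(\gamma)$ down to $\{0,\tfrac12\}$) is already recorded in the preliminaries. The only points requiring a little care are to make sure that the homomorphism property is applied to the restriction of the action to the abelian---hence amenable---subgroup $A$ (so that the rotation number of the product $\prod_{k=0}^{11}g_k^2$ really splits as a sum), and to observe that passing to squares is exactly what annihilates the possible value $\rot(\gamma)=\tfrac12$. Note that faithfulness of the action is not actually needed for this statement; it is included only for uniformity with the rest of the argument.
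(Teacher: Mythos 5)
Your argument is correct and is essentially the paper's own proof: both use the Klein bottle relation $\alpha^6\gamma\alpha^{-6}=\gamma^{-1}$ to force $2\rot(\gamma)=0$ (equivalently, that $\gamma^2$ and hence each conjugate $g_k^2$ has a fixed point), and then the identity $\beta=\prod_{k=0}^{11}g_k^2$ together with the homomorphism property of $\rot$ on the amenable subgroup $\langle g_0,\dots,g_{11}\rangle$ to get $\rot(\beta)=0$. Your side remarks (spelling out which amenable subgroup is used, and that faithfulness is not needed here) are accurate.
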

\begin{proof}
As explained in Section \ref{s:p-circle}, the relation $\alpha^6\gamma\alpha^{-6}=\gamma^{-1}$ implies that $\gamma^2$ has a fixed point, and so does every conjugate $g^2_k$. In terms of rotation number, this gives $\rot(g^2_k)=0$. Using Lemma \ref{lem:relation}, we get $\rot(\beta)=\sum_{k=0}^{11}\rot(g_k^2)=0$ (recall that the function rotation number is a homomorphism in restriction to amenable groups). We conclude that $\beta$ has a fixed point.
\end{proof}

\begin{prop}\label{p:fixbetacircle}
	Consider any faithful action of $H$ on $\T$ with $\Fix(\alpha)\neq\varnothing$. For any connected component $I$ of $\T\setminus \Fix(\alpha)$, we have that $\Fix(\beta)\cap I$ is infinite.
\end{prop}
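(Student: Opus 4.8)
The plan is to mirror the strategy of Proposition \ref{p:fixbeta} for the line, but replacing the line-based lemmas by their circle counterparts, in particular Lemma \ref{l:kleincircle}. Fix a connected component $I$ of $\T\setminus\Fix(\alpha)$. As in the line case, the relations $\alpha^6\gamma\alpha^{-6}=\gamma^{-1}$ and $\alpha^6\delta\alpha^{-6}=\delta^{-1}$ exhibit $\langle\alpha,\gamma\rangle$ and $\langle\alpha,\delta\rangle$ as quotients of the Klein bottle group $K$, with $\alpha$ playing the role of $f$. Since $\Fix(\alpha)\neq\varnothing$, Lemma \ref{l:kleincircle} applies to these actions: I would first record that $\Fix(\alpha)\subset\Fix(\gamma^2)$ and $\Fix(\alpha)\subset\Fix(\delta^2)$, and then that on the component $I$ the fixed-point sets $\Fix(\gamma^2)\cap I$ and $\Fix(\delta^2)\cap I$ are infinite.

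The next step is to descend from $\gamma^2,\delta^2$ to the conjugates $g_k=\alpha^k\delta^{-2}\gamma\delta^2\alpha^{-k}$ and ultimately to $\beta$. The key structural input is Lemma \ref{lem:relation}, which gives $\prod_{k=0}^{11}g_k^2=\beta$, together with the fact that the $g_k$ pairwise commute, so that $A=\langle g_0,\dots,g_{11}\rangle$ is abelian and contains $\beta$. The difficulty is that on the circle I cannot directly run the line-based inductive rank argument of Proposition \ref{p:fixbeta}, because a priori the $g_k$ need only satisfy $\rot(g_k)\in\{0,\tfrac12\}$ rather than genuinely fixing points. The clean way around this is to work with the squares: each $g_k^2$ has a fixed point (its rotation number vanishes, as in Lemma \ref{l:fixbetacircle}), so the abelian group $A_0=\langle g_0^2,\dots,g_{11}^2\rangle$ consists of elements each having a fixed point.

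I would then argue that $A_0$ admits a global fixed point \emph{inside $I$}. Since $I$ is a component of $\T\setminus\Fix(\alpha)$ and $\alpha$ commutes with $\beta$ and normalizes the relevant subgroups, $I$ is invariant under $A_0$; identifying $I$ with $\R$, one reduces to an action of the finitely generated abelian group $A_0$ on the line in which each generator has a fixed point, and the inductive argument via Lemma \ref{l:fixf} (exactly as in Proposition \ref{p:fixbeta}) produces a common fixed point in $I$. To launch this induction I must know that at least one generator, say $g_0^2$, actually fixes a point inside $I$ and not merely somewhere on $\T$; this is supplied by Lemma \ref{l:kleincircle} applied to $\langle\alpha,\gamma\rangle$, which guarantees infinitely many fixed points of $\gamma^2$ in $I$, hence of each $g_k^2$ in the appropriate $\alpha^k$-translate, all landing in the same $\alpha$-invariant component $I$.

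Finally, $\beta=\prod_{k=0}^{11}g_k^2\in A_0$ inherits a fixed point in $I$ from the global fixed point of $A_0$, and since $\alpha$ commutes with $\beta$ and acts without fixed points on $I$, the orbit under $\alpha$ of any such fixed point yields infinitely many fixed points of $\beta$ in $I$, as desired. The main obstacle I anticipate is the passage from rotation-number information to genuine fixed points inside the prescribed component $I$: on the circle one must be careful that the common fixed point produced is interior to $I$ rather than on $\Fix(\alpha)$, and this is precisely where Lemma \ref{l:kleincircle} (giving \emph{infinitely many} fixed points strictly inside each component) does the essential work, allowing the line-based reduction to go through verbatim.
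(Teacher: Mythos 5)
Your proposal is correct and follows essentially the same route as the paper: deduce $\Fix(\alpha)\subset\Fix(\gamma^2)\cap\Fix(\delta^2)$ from Lemma \ref{l:kleincircle}, so that each $g_k^2$ preserves $I$ and has fixed points there, then apply the inductive argument of Proposition \ref{p:fixbeta} to the abelian group $\langle g_0^2,\dots,g_{11}^2\rangle$, which contains $\beta$ by Lemma \ref{lem:relation}. The points you flag as delicate (working with squares to avoid rotation number $\tfrac12$, and locating the fixed points inside $I$ rather than merely on $\T$) are exactly the adjustments the paper makes.
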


\begin{proof}
Given a faithful action of $H$ on the circle, we deduce from the relations $\alpha^6\gamma\alpha^{-6}=\gamma^{-1}$ and Lemma \ref{l:kleincircle} that $\Fix(\alpha)\subset\Fix(\gamma^2)\cap \Fix(\delta^2)$. Therefore, every $g^2_k$ preserves any connected component $I$ of $\T\setminus \Fix(\alpha)$, and we conclude as in Proposition \ref{p:fixbeta}, considering the abelian subgroup $\langle g_0^2,\ldots,g_{11}^2\rangle$, which contains $\beta$.
\end{proof}

\section{Involution and conclusion}

Next, let $\eta(x,y)=(y,x)$ be the involution exchanging the two axes, and define $\overline{\gamma}=\eta \gamma \eta$ and $\overline \delta=\eta\delta\eta$.
Observe that $\eta$ normalizes $\Homeo(I^2,\partial I^2)$, and that $\eta\alpha\eta=\beta$.

We shall first give a proof of Theorem \ref{t:hyde}, although it is a formal consequence of Theorem \ref{t:circle}.

\begin{proof}[Proof of Theorem \ref{t:hyde}]
Let $H=\langle \alpha,\beta,\gamma,\delta\rangle$ be as above.
Let $I\subset\R\setminus \Fix(\alpha)$ be a connected component. After Proposition \ref{p:fixbeta}, there exists a connected component $J$ of $\R\setminus \Fix(\beta)$ which is contained in $I$.
On the other hand, applying Proposition \ref{p:fixbeta} to the group $\overline H=\langle \alpha, \beta, \overline \gamma,\overline \delta\rangle$, we deduce that for any faithful action of $\overline H$ on the real line, for every connected component $J$ of the complement of $\Fix(\beta)$ in $\R$, we must have that $\Fix(\alpha)\cap J$ is infinite.
From this, we conclude that there can be no faithful action of the group $G=\langle H,\overline H\rangle$ on $\R$. Observe that by construction, we have $G\le \Homeo(I^2,\partial I^2)$. This proves Theorem \ref{t:hyde}.
\end{proof}

\begin{proof}[Proof of Theorem \ref{t:circle}]
	First of all, we note that for Theorem \ref{t:circle}, it suffices to find a finitely generated subgroup $G$ which does not act on the circle by \emph{orientation-preserving} homeomorphisms.
	Indeed, all the generators $\alpha,\beta,\gamma,\delta,\overline \gamma,\overline \delta\in \Homeo(\R^2)$ admit a ``square root'' in $\Homeo(D,\partial D)$, that is, an element whose square gives the generator. It is then enough to work with the group $\widetilde G$ generated by the square roots, because for every homeomorphism $f:\T\to \T$, the square $f^2$ preserves the orientation, so that for any representation $\rho:\widetilde G\to \Homeo(\T)$, we will have the inclusion $\rho(G)\subset \Homeo_+(\T)$.

Now, for any faithful action of $G$ on the circle, applying Lemma \ref{l:fixbetacircle} to both $H$ and $\overline{H}$, we deduce that both $\beta$ and $\alpha$ have fixed points. Thus we can apply Proposition \ref{p:fixbetacircle} and conclude as for the proof of Theorem \ref{t:hyde}.
\end{proof}

\section*{Acknowledgments}
We thank Ioannis  Iakovoglou and Luis Paris for the encouragement to write down a detailed proof. The author  has been partially supported by the projects ANR Gromeov (ANR-19-CE40-0007) and ANER Agroupes (AAP 2019 R\'egion Bourgogne--Franche--Comt\'e).

\begin{bibdiv}
\begin{biblist}

\bib{CR}{book}{
	author = {\scshape Clay, A.},
	author = {\scshape Rolfsen, D.},
   title={Ordered groups and topology},
   series={Graduate Studies in Mathematics},
   volume={176},
   publisher={American Mathematical Society, Providence, RI},
   date={2016},
   pages={x+154},
   isbn={978-1-4704-3106-8},
   review={\MR{3560661}},
}

\bib{conrad}{article}{
	author={Conrad, P. F.},
	title={Embedding theorems for abelian groups with valuations},
	journal={Amer. J. Math.},
	volume={75},
	date={1953},
	pages={1--29},
	issn={0002-9327},
	review={\MR{53933}},
}

\bib{Ghys}{article}{
	AUTHOR = {\scshape Ghys, \'{E}.},
     TITLE = {Groups acting on the circle},
   JOURNAL = {Enseign. Math. (2)},
     VOLUME = {47},
      YEAR = {2001},
    NUMBER = {3-4},
     PAGES = {329--407},
     review={\MR{1876932}},
}

\bib{Hyde}{article}{
	author = {\scshape Hyde, J.},
	title = {The group of boundary fixing homeomorphisms of the disc is not left-orderable},
	journal = {Ann. Math. (2)},
	volume = {190},
	number = {2},
	pages = {657--661},
	year = {2019},
	issn={0003-486X},
   review={\MR{3997131}},
}

\end{biblist}
\end{bibdiv}

\end{document}